\newtheorem{theorem}{Theorem}[section]
\newtheorem{conjecture}[theorem]{Conjecture}
\newtheorem{definition}[theorem]{Definition}
\newtheorem{remark}[theorem]{Remark}
\newtheorem{question}[theorem]{Question}
\title{This is the title}
\begin{document}
\hrule\hrule\hrule\hrule\hrule
\vspace{0.3cm}	
\begin{center}
{\bf{p-ADIC WELCH BOUNDS AND p-ADIC  ZAUNER CONJECTURE}}\\
\vspace{0.3cm}
\hrule\hrule\hrule\hrule\hrule
\vspace{0.3cm}
\textbf{K. MAHESH KRISHNA}\\
Post Doctoral Fellow \\
Statistics and Mathematics Unit\\
Indian Statistical Institute, Bangalore Centre\\
Karnataka 560 059, India\\
Email: kmaheshak@gmail.com\\

Date: \today
\end{center}

\hrule\hrule
\vspace{0.5cm}
\textbf{Abstract}: Let $p$ be a prime. For $d\in \mathbb{N}$, let $\mathbb{Q}_p^d$  be the standard $d$-dimensional p-adic  Hilbert space.  Let $m \in \mathbb{N}$ and   $\text{Sym}^m(\mathbb{Q}_p^d)$ be  the p-adic Hilbert   space of symmetric m-tensors. We prove the following result.  Let  $\{\tau_j\}_{j=1}^n$ be   a collection in $\mathbb{Q}_p^d$
satisfying (i) $\langle \tau_j, \tau_j\rangle =1$ for all $1\leq j \leq n$ and (ii) there exists $b \in \mathbb{Q}_p$ satisfying  $	\sum_{j=1}^{n}\langle x, \tau_j\rangle \tau_j =bx$  for all $ x \in \mathbb{Q}^d_p.$ Then 
 \begin{align}\label{WELCHNONABSTRACT}
		  \max_{1\leq j,k \leq n, j \neq k}\{|n|, |\langle \tau_j, \tau_k\rangle|^{2m} \}\geq \frac{|n|^2}{\left|{d+m-1 \choose m}\right| }.
\end{align}
We call Inequality (\ref{WELCHNONABSTRACT})   as the p-adic version of Welch bounds obtained by     Welch [\textit{IEEE Transactions on  Information Theory, 1974}].  Inequality (\ref{WELCHNONABSTRACT}) differs from the non-Archimedean Welch bound obtained recently by  M. Krishna   as one can not derive one from another. We formulate p-adic  Zauner conjecture.

\textbf{Keywords}:  p-adic number field, p-adic Hilbert space, Welch bound,  Zauner conjecture. 

\textbf{Mathematics Subject Classification (2020)}: 12J25, 46S10, 47S10, 11D88.\\

\hrule

\hrule
\section{Introduction}
In 1974  Prof. L. Welch proved the following result \cite{WELCH}.
  \begin{theorem}\cite{WELCH}\label{WELCHTHEOREM} (\textbf{Welch Bounds})
	Let $n> d$.	If	$\{\tau_j\}_{j=1}^n$  is any collection of  unit vectors in $\mathbb{C}^d$, then
	\begin{align*}
		\sum_{1\leq j,k \leq n}|\langle \tau_j, \tau_k\rangle |^{2m}=\sum_{j=1}^n\sum_{k=1}^n|\langle \tau_j, \tau_k\rangle |^{2m}\geq \frac{n^2}{{d+m-1\choose m}}, \quad \forall m \in \mathbb{N}.
	\end{align*}
	In particular,
	\begin{align*}
	\sum_{1\leq j,k \leq n}|\langle \tau_j, \tau_k\rangle |^{2}=		\sum_{j=1}^n\sum_{k=1}^n|\langle \tau_j, \tau_k\rangle |^{2}\geq \frac{n^2}{{d}}.
	\end{align*}
	Further, 
	\begin{align*}
		\text{(\textbf{Higher order Welch bounds})}	\quad		\max _{1\leq j,k \leq n, j\neq k}|\langle \tau_j, \tau_k\rangle |^{2m}\geq \frac{1}{n-1}\left[\frac{n}{{d+m-1\choose m}}-1\right], \quad \forall m \in \mathbb{N}.
	\end{align*}
	In particular,
	\begin{align*}
		\text{(\textbf{First order Welch bound})}\quad 	\max _{1\leq j,k \leq n, j\neq k}|\langle \tau_j, \tau_k\rangle |^{2}\geq\frac{n-d}{d(n-1)}.
	\end{align*}
\end{theorem}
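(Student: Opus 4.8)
The plan is to derive all four displayed inequalities from a single application of the Cauchy--Schwarz inequality on the space of symmetric $m$-tensors, via the moment method. First I would pass to the $m$-th symmetric tensor power: to each unit vector $\tau_j \in \mathbb{C}^d$ associate $\tau_j^{\otimes m} \in \mathrm{Sym}^m(\mathbb{C}^d)$, recalling that this space has dimension $D := \binom{d+m-1}{m}$ and that the tensor construction is multiplicative on inner products, so that $\langle \tau_j^{\otimes m}, \tau_k^{\otimes m}\rangle = \langle \tau_j, \tau_k\rangle^m$ and in particular $\|\tau_j^{\otimes m}\| = \|\tau_j\|^m = 1$. This single identity converts the $2m$-th powers $|\langle\tau_j,\tau_k\rangle|^{2m}$ appearing in the statement into ordinary squared moduli of inner products of unit vectors living in a $D$-dimensional space.

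Next I would introduce the frame operator $S := \sum_{j=1}^n \tau_j^{\otimes m}(\tau_j^{\otimes m})^\ast$ on $\mathrm{Sym}^m(\mathbb{C}^d)$, which is positive semidefinite. Two trace computations do the work: $\mathrm{tr}(S) = \sum_{j=1}^n \|\tau_j^{\otimes m}\|^2 = n$, while expanding $S^2$ gives $\mathrm{tr}(S^2) = \sum_{j=1}^n\sum_{k=1}^n |\langle \tau_j^{\otimes m}, \tau_k^{\otimes m}\rangle|^2 = \sum_{j=1}^n\sum_{k=1}^n |\langle \tau_j, \tau_k\rangle|^{2m}$. Since $S$ acts on the $D$-dimensional space $\mathrm{Sym}^m(\mathbb{C}^d)$, it has $D$ eigenvalues $\lambda_1,\dots,\lambda_D \ge 0$, and Cauchy--Schwarz applied to $(\lambda_i)$ against the all-ones vector gives $(\mathrm{tr}\,S)^2 = (\sum_i \lambda_i)^2 \le D\sum_i \lambda_i^2 = D\,\mathrm{tr}(S^2)$. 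Substituting the two traces yields $n^2 \le \binom{d+m-1}{m}\sum_{j,k}|\langle\tau_j,\tau_k\rangle|^{2m}$, which is exactly the first (sum) inequality; the stated $m=1$ case is the specialization $D = d$.

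To obtain the higher order (max) bounds I would separate the diagonal from the off-diagonal terms in the double sum. The $n$ diagonal terms each equal $1$ because the $\tau_j$ are unit vectors, and each of the $n(n-1)$ off-diagonal terms is at most $M := \max_{j\neq k}|\langle \tau_j,\tau_k\rangle|^{2m}$, so $\sum_{j,k}|\langle\tau_j,\tau_k\rangle|^{2m} \le n + n(n-1)M$. Combining this with the sum bound gives $n^2/D \le n + n(n-1)M$; dividing by $n$ and rearranging produces $M \ge \frac{1}{n-1}\big[\frac{n}{\binom{d+m-1}{m}} - 1\big]$, the higher order Welch bound, and setting $m=1$ (so $D=d$) collapses the bracket to $(n-d)/d$ and yields the first order bound $\frac{n-d}{d(n-1)}$. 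The hypothesis $n>d$ is what makes this last quantity positive, hence the bound non-vacuous.

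I expect the only genuine subtlety to lie in the setup of the first step rather than in any inequality: one must fix the Hermitian inner product on $\mathrm{Sym}^m(\mathbb{C}^d)$ so that the multiplicativity $\langle \tau^{\otimes m}, \sigma^{\otimes m}\rangle = \langle \tau,\sigma\rangle^m$ holds exactly, and confirm the dimension count $\dim \mathrm{Sym}^m(\mathbb{C}^d) = \binom{d+m-1}{m}$. Once that functorial bookkeeping is in place, the remaining steps are a direct Cauchy--Schwarz estimate followed by elementary arithmetic, and no positivity or rank issues arise because $S$ is manifestly a sum of rank-one positive operators.
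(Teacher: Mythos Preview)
The paper does not actually prove this statement; Theorem~\ref{WELCHTHEOREM} is quoted from \cite{WELCH} as background and no proof is given. Your argument is correct and is the standard modern proof via symmetric tensors and the trace inequality $(\operatorname{tr} S)^2 \le D\,\operatorname{tr}(S^2)$; it is moreover exactly the template the paper adapts for its own p-adic analogues (Theorems~\ref{WELCHNON1} and~\ref{WELCHNON2}), where the Cauchy--Schwarz step on eigenvalues is replaced by the ultrametric inequality.
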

 It is impossible to list  all   applications of Theorem \ref{WELCHTHEOREM}. A few are:   in the study of root-mean-square (RMS) absolute cross relation of unit vectors   \cite{SARWATEMEETING}, frame potential \cite{BENEDETTOFICKUS, CASAZZAFICKUSOTHERS, BODMANNHAASPOTENTIAL}, 
 correlations \cite{SARWATE},  codebooks \cite{DINGFENG}, numerical search algorithms  \cite{XIA, XIACORRECTION}, quantum measurements 
\cite{SCOTTTIGHT}, coding and communications \cite{TROPPDHILLON, STROHMERHEATH}, code division multiple access (CDMA) systems \cite{CHEBIRA1, CHEBIRA2}, wireless systems \cite{YATES}, compressed/compressive sensing \cite{TAN, VIDYASAGAR, FOUCARTRAUHUT, ELDARKUTYNIOK, BAJWACALDERBANKMIXON, TROPP, SCHNASSVANDERGHEYNST, ALLTOP},  `game of Sloanes' \cite{JASPERKINGMIXON}, equiangular tight frames \cite{SUSTIKTROPP}, equiangular lines \cite{MIXONSOLAZZO, COUTINHOGODSILSHIRAZIZHAN, FICKUSJASPERMIXON, IVERSONMIXON2022}, digital fingerprinting \cite{MIXONQUINNKIYAVASHFICKUS}  etc.

 Theorem \ref{WELCHTHEOREM}  has been improved/different proofs were  given   in \cite{CHRISTENSENDATTAKIM, DATTAWELCHLMA, WALDRONSH, WALDRON2003, DATTAHOWARD, ROSENFELD, HAIKINZAMIRGAVISH, EHLEROKOUDJOU, STROHMERHEATH}.  In 2021 M. Krishna derived continuous version of  Theorem \ref{WELCHTHEOREM} \cite{MAHESHKRISHNA}. In 2022 M. Krishna obtained Theorem \ref{WELCHTHEOREM} for Hilbert C*-modules \cite{MAHESHKRISHNA2}, Banach spaces \cite{MAHESHKRISHNA3} and non-Archimedean Hilbert spaces \cite{MAHESHKRISHNA4}.
 
In this paper we derive p-adic Welch bounds (Theorem \ref{WELCHNON2}). We formulate  p-adic Zauner conjecture (Conjecture \ref{NZ}).

\section{p-adic Welch bounds}

Let $p$ be a prime. For $d \in \mathbb{N}$, let $\mathbb{Q}_p^d$ be the standard p-adic Hilbert space equipped with the inner product 
\begin{align*}
	\langle (a_j)_{j=1}^d,(b_j)_{j=1}^d\rangle \coloneqq \sum_{j=1}^da_jb_j,  \quad \forall (a_j)_{j=1}^d,(b_j)_{j=1}^d \in \mathbb{Q}_p^d.
\end{align*}
Let $I_{\mathbb{Q}_p^d}$ be the identity operator on $\mathbb{Q}_p^d$.  Note that $\mathbb{Q}_p^d$ is not a non-Archimedean Hilbert space as it does not satisfies Equation (2) in  \cite{MAHESHKRISHNA4} (see Page 40, \cite{PEREZGARCIASCHIKHOF}). We refer  \cite{KALISCH, DIAGANABOOK, DIAGANARAMAROSON, KHRENNIKOV, ALBEVERIO} for more on p-adic Hilbert spaces.
\begin{theorem}\label{WELCHNON1}
\textbf{(First Order  p-adic Welch Bound)}	Let $p$ be a prime and $n, d \in \mathbb{N}$. If 	   $\{\tau_j\}_{j=1}^n$ is any  collection in $\mathbb{Q}_p^d$
such that  there exists $b \in \mathbb{Q}_p$ satisfying 
\begin{align*}
	\sum_{j=1}^{n}\langle x, \tau_j\rangle \tau_j =bx, \quad \forall x \in \mathbb{Q}^d_p,
\end{align*} 
then 
\begin{align*}
 \max_{1\leq j,k \leq n, j \neq k}\left \{\left| \sum_{l=1}^n\langle \tau_l,\tau_l \rangle^2 \right|, |\langle \tau_j,\tau_k\rangle|^2\right\}\geq \frac{1}{|d|}	\left|\sum_{j=1}^n\langle \tau_j, \tau_j \rangle \right|^2.	
\end{align*}
In particular, if $\langle \tau_j, \tau_j\rangle =1$ for all $1\leq j \leq n$, then 
\begin{align*}
 \text{\textbf{(First order  p-adic  Welch bound)}} \quad \max_{1\leq j,k \leq n, j \neq k}\{|n|, |\langle \tau_j, \tau_k\rangle|^2 \}\geq \frac{|n|^2}{|d|}.	
\end{align*}
\end{theorem}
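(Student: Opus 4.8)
The plan is to let the tightness hypothesis do the work that the Cauchy--Schwarz inequality does in the classical proof of Theorem \ref{WELCHTHEOREM}; since $\mathbb{Q}_p$ carries no compatible order and hence no inner-product inequality, the assumption $Sx \coloneqq \sum_{j=1}^n\langle x,\tau_j\rangle\tau_j = bx$ must supply all the needed quantitative input. First I would pin down $b$ by the coordinate computation that stands in for $\operatorname{tr}(S) = bd$: evaluating $S = bI_{\mathbb{Q}_p^d}$ at the $i$-th standard basis vector of $\mathbb{Q}_p^d$ and reading off its $i$-th coordinate gives $\sum_{j=1}^n(\tau_j)_i^2 = b$, and summing over $i = 1, \dots, d$ yields $\sum_{j=1}^n\langle\tau_j,\tau_j\rangle = bd$, so that $b = \frac1d\sum_{j=1}^n\langle\tau_j,\tau_j\rangle$ (legitimate because $d\in\mathbb{N}$ is nonzero, hence invertible, in $\mathbb{Q}_p$).

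Next I would produce the p-adic analogue of $\operatorname{tr}(S^2) = \sum_{j,k}|\langle\tau_j,\tau_k\rangle|^2$, again avoiding any abstract trace functional. Taking $x = \tau_k$ in the hypothesis gives $\sum_{j=1}^n\langle\tau_k,\tau_j\rangle\tau_j = b\tau_k$; pairing both sides with $\tau_k$ and using that the inner product is symmetric and bilinear gives $\sum_{j=1}^n\langle\tau_j,\tau_k\rangle^2 = b\langle\tau_k,\tau_k\rangle$. Summing over $k = 1,\dots,n$ and substituting the value of $b$,
\begin{align*}
\sum_{j=1}^n\sum_{k=1}^n\langle\tau_j,\tau_k\rangle^2 = b\sum_{k=1}^n\langle\tau_k,\tau_k\rangle = \frac1d\left(\sum_{j=1}^n\langle\tau_j,\tau_j\rangle\right)^2 .
\end{align*}

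The last step is purely non-Archimedean. Separating the diagonal terms,
\begin{align*}
\frac1d\left(\sum_{j=1}^n\langle\tau_j,\tau_j\rangle\right)^2 = \sum_{l=1}^n\langle\tau_l,\tau_l\rangle^2 + \sum_{\substack{1\le j,k\le n\\ j\neq k}}\langle\tau_j,\tau_k\rangle^2 ,
\end{align*}
I would take p-adic absolute values and apply the ultrametric inequality twice: once to bound the absolute value of the two-term right-hand side by the larger of the two absolute values, and once to bound $\bigl|\sum_{j\neq k}\langle\tau_j,\tau_k\rangle^2\bigr|$ by $\max_{j\neq k}|\langle\tau_j,\tau_k\rangle|^2$. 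This yields
\begin{align*}
\frac{1}{|d|}\left|\sum_{j=1}^n\langle\tau_j,\tau_j\rangle\right|^2 \le \max_{1\le j,k\le n,\ j\neq k}\left\{\left|\sum_{l=1}^n\langle\tau_l,\tau_l\rangle^2\right|,\ |\langle\tau_j,\tau_k\rangle|^2\right\},
\end{align*}
which is the asserted bound; the ``in particular'' statement then follows at once on setting $\langle\tau_j,\tau_j\rangle = 1$ for every $j$, since then $\sum_{j=1}^n\langle\tau_j,\tau_j\rangle = n = \sum_{l=1}^n\langle\tau_l,\tau_l\rangle^2$. I expect no serious obstacle: the only delicate point is justifying the two ``trace'' identities, and carrying them out in coordinates in the standard basis of $\mathbb{Q}_p^d$ removes that difficulty, the hypothesis $S = bI_{\mathbb{Q}_p^d}$ having already furnished the crucial equality in place of the classical inequality.
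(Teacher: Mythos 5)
Your proof is correct and takes essentially the same route as the paper: both establish the identities $\sum_{j}\langle \tau_j,\tau_j\rangle = bd$ and $\sum_{j,k}\langle \tau_j,\tau_k\rangle^2 = b^2d$ (the paper by computing $\operatorname{Tra}(S_\tau)$ and $\operatorname{Tra}(S_\tau^2)$, you by coordinate evaluation and by substituting $x=\tau_k$ and pairing with $\tau_k$, which amounts to the same computation), then separate the diagonal from the off-diagonal terms and apply the ultrametric inequality twice. The only difference is presentational: you verify the trace identities by hand instead of invoking the trace functional.
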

\begin{proof}
	Define $S_\tau : \mathbb{Q}_p^d\ni x \mapsto \sum_{j=1}^n\langle x, \tau_j\rangle \tau_j \in \mathbb{Q}_p^d$. Then 
\begin{align*}
	&bd=\operatorname{Tra}(bI_{\mathbb{Q}_p^d})=\operatorname{Tra}(S_{\tau})=\sum_{j=1}^n\langle \tau_j, \tau_j \rangle , \\
	& b^2d=\operatorname{Tra}(b^2I_{\mathbb{Q}_p^d})=\operatorname{Tra}(S^2_{\tau})=\sum_{j=1}^n\sum_{k=1}^n\langle \tau_j, \tau_k \rangle\langle \tau_k, \tau_j \rangle.
\end{align*}	
Therefore 

\begin{align*}
	\left|\sum_{j=1}^n\langle \tau_j, \tau_j \rangle \right|^2&=|\operatorname{Tra}(S_{\tau})|^2=|bd|^2=|d||b^2d|	
	=|d|\left|\sum_{j=1}^n\sum_{k=1}^n\langle \tau_j, \tau_k \rangle\langle \tau_k, \tau_j \rangle\right|\\
	&=|d|\left| \sum_{l=1}^n\langle \tau_l,\tau_l \rangle^2+\sum_{j,k=1, j\neq k}^n\langle \tau_j,\tau_k\rangle \langle \tau_k,\tau_j\rangle\right|\\
&\leq |d| \max_{1\leq j,k \leq n, j \neq k}\left \{\left| \sum_{l=1}^n\langle \tau_l,\tau_l \rangle^2 \right|, |\langle \tau_j,\tau_k\rangle \langle \tau_k,\tau_j\rangle| \right\}\\
&= |d| \max_{1\leq j,k \leq n, j \neq k}\left \{\left| \sum_{l=1}^n\langle \tau_l,\tau_l\rangle^2 \right|, |\langle \tau_j,\tau_k\rangle|^2\right\}.
\end{align*}
Whenever $\langle \tau_j, \tau_j\rangle =1$ for all $1\leq j \leq n$, 
\begin{align*}
	|n|^2\leq |d|\max_{1\leq j,k \leq n, j \neq k}\{|n|, |\langle \tau_j, \tau_k\rangle|^2 \}.
\end{align*}
\end{proof}
We next derive higher order p-adic  Welch bounds. For this, we need the following result.
  \begin{theorem}\cite{COMON, BOCCI}\label{SYMMETRICTENSORDIMENSION}
  	If $\mathcal{V}$ is a vector space of dimension $d$ and $\text{Sym}^m(\mathcal{V})$ denotes the vector space of symmetric m-tensors, then 
  	\begin{align*}
  		\text{dim}(\text{Sym}^m(\mathcal{V}))={d+m-1 \choose m}, \quad \forall m \in \mathbb{N}.
  	\end{align*}
  \end{theorem}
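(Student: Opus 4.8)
The plan is to prove the formula by producing an explicit basis of $\text{Sym}^m(\mathcal{V})$ indexed by the multisets of size $m$ on $d$ symbols, and then counting those multisets. Fix a basis $e_1,\dots,e_d$ of $\mathcal{V}$, so that $\mathcal{V}^{\otimes m}$ has the basis $\{e_{i_1}\otimes\cdots\otimes e_{i_m}: (i_1,\dots,i_m)\in\{1,\dots,d\}^m\}$. I would realize $\text{Sym}^m(\mathcal{V})$ as the quotient of $\mathcal{V}^{\otimes m}$ by the subspace spanned by all $v_1\otimes\cdots\otimes v_m-v_{\sigma(1)}\otimes\cdots\otimes v_{\sigma(m)}$ with $\sigma\in S_m$, write $\pi$ for the quotient map and $v_1\cdots v_m\coloneqq\pi(v_1\otimes\cdots\otimes v_m)$. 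Since $\mathbb{Q}_p$ has characteristic $0$, this quotient is canonically isomorphic to the subspace of symmetric tensors via the symmetrizer $\tfrac{1}{m!}\sum_{\sigma\in S_m}\sigma$, so the choice of model is immaterial.

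First I would check that $B\coloneqq\{e_{i_1}\cdots e_{i_m}: 1\le i_1\le i_2\le\cdots\le i_m\le d\}$ spans $\text{Sym}^m(\mathcal{V})$. Given any tuple $(i_1,\dots,i_m)$, some permutation sorts it into non-decreasing order, and by the definition of the quotient $\pi(e_{i_1}\otimes\cdots\otimes e_{i_m})$ coincides with $\pi$ of the sorted tensor; as the tensors $e_{i_1}\otimes\cdots\otimes e_{i_m}$ span $\mathcal{V}^{\otimes m}$ and $\pi$ is onto, $B$ spans.

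The harder half is linear independence of $B$. The cleanest route is to identify the full symmetric algebra $\bigoplus_{m\ge 0}\text{Sym}^m(\mathcal{V})$ with the polynomial ring $\mathbb{Q}_p[x_1,\dots,x_d]$ via $e_i\mapsto x_i$; under this ring isomorphism the elements of $B$ correspond to the degree-$m$ monomials $x_1^{a_1}\cdots x_d^{a_d}$ with $a_1+\cdots+a_d=m$, and distinct monomials in a polynomial ring are linearly independent. Alternatively, without invoking the polynomial ring, one shows independence directly by constructing, for each non-decreasing tuple, a symmetric $m$-linear form on $\mathcal{V}$ (equivalently a linear functional on $\text{Sym}^m(\mathcal{V})$) that is nonzero on the associated monomial and vanishes on the others. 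This bookkeeping is the step I expect to be the main obstacle, though it is entirely elementary.

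Finally I would count $|B|$. A non-decreasing tuple $1\le i_1\le\cdots\le i_m\le d$ is precisely the data of a size-$m$ multiset on $\{1,\dots,d\}$, i.e.\ of a solution in non-negative integers of $a_1+\cdots+a_d=m$, where $a_j$ records how many entries equal $j$. The standard stars-and-bars bijection (place $d-1$ bars among $m$ stars) gives $\binom{m+d-1}{d-1}=\binom{d+m-1}{m}$ such solutions. Hence $\dim(\text{Sym}^m(\mathcal{V}))=|B|=\binom{d+m-1}{m}$ for every $m\in\mathbb{N}$, as asserted.
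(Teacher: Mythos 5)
Your argument is correct, and it is the standard proof of this dimension count: sorted basis monomials span, they are independent (most transparently via the identification of the symmetric algebra with the polynomial ring $\mathbb{Q}_p[x_1,\dots,x_d]$), and non-decreasing $m$-tuples from $\{1,\dots,d\}$ are counted by stars and bars as $\binom{d+m-1}{m}$. Note, however, that the paper does not prove this statement at all; it is quoted from the cited references (Comon et al.; Bocci--Carlini) as a known fact, so there is no in-paper argument to compare against. Two small remarks on your write-up: the step you flag as the main obstacle (linear independence) is actually immediate once you pass to the polynomial-ring model, since distinct monomials are by definition part of a basis of the graded piece of degree $m$, so no dual-functional bookkeeping is needed; and your appeal to characteristic $0$ for identifying the quotient $\mathcal{V}^{\otimes m}/(\text{alternations})$ with the subspace of symmetric tensors is legitimate here (the field is $\mathbb{Q}_p$), though the dimension formula itself holds for either model over any field --- in the invariant-subspace model the orbit sums of the sorted basis tensors give a basis directly, again indexed by multisets.
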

  
  \begin{theorem}\label{WELCHNON2}
(\textbf{Higher Order p-adic  Welch Bounds}) Let $p$ be a prime and $n, d, m \in \mathbb{N}$.   
If 	   $\{\tau_j\}_{j=1}^n$ is any  collection in $\mathbb{Q}_p^d$
such that  there exists $b \in \mathbb{Q}_p$ satisfying 
\begin{align*}
	\sum_{j=1}^{n}\langle x, \tau_j^{\otimes m}\rangle \tau_j^{\otimes m} =bx, \quad \forall x \in \text{Sym}^m(\mathbb{Q}_p^d),
\end{align*} 
then 
\begin{align*}
\max_{1\leq j,k \leq n, j \neq k}\left \{\left| \sum_{l=1}^n\langle \tau_l,\tau_l\rangle^{2m} \right|, |\langle \tau_j,\tau_k\rangle|^{2m}\right\}\geq \frac{1}{\left|{d+m-1 \choose m}\right|}\left|\sum_{j=1}^n\langle \tau_j, \tau_j \rangle^m \right|^2.	
\end{align*}
In particular, if $\langle \tau_j, \tau_j\rangle =1$ for all $1\leq j \leq n$, then
\begin{align*}
 \text{\textbf{(Higher order p-adic  Welch bound)}} \quad  \max_{1\leq j,k \leq n, j \neq k}\{|n|, |\langle \tau_j, \tau_k\rangle|^{2m} \}\geq \frac{|n|^2}{\left|{d+m-1 \choose m}\right| }.	
\end{align*}
 \end{theorem}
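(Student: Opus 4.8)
The plan is to transport the trace argument of Theorem \ref{WELCHNON1} to the p-adic Hilbert space $\operatorname{Sym}^m(\mathbb{Q}_p^d)$. Write $D \coloneqq \binom{d+m-1}{m}$, which by Theorem \ref{SYMMETRICTENSORDIMENSION} is precisely $\dim \operatorname{Sym}^m(\mathbb{Q}_p^d)$. The first step is to record the compatibility of the inner product with the symmetric $m$-th tensor power: for all $j,k$ one has $\langle \tau_j^{\otimes m}, \tau_k^{\otimes m}\rangle = \langle \tau_j, \tau_k\rangle^m$, which is immediate from the definition of the tensor-product inner product, and in particular $\langle \tau_j^{\otimes m}, \tau_j^{\otimes m}\rangle = \langle \tau_j, \tau_j\rangle^m$. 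Thus $\{\tau_j^{\otimes m}\}_{j=1}^n$ is a family in the $D$-dimensional space $\operatorname{Sym}^m(\mathbb{Q}_p^d)$ to which the argument of Theorem \ref{WELCHNON1}, with $d$ replaced by $D$, applies.

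Next I would run the computation verbatim inside $\operatorname{Sym}^m(\mathbb{Q}_p^d)$. Put $S x = \sum_{j=1}^n \langle x, \tau_j^{\otimes m}\rangle \tau_j^{\otimes m}$, so that the hypothesis reads $S = b\,I$. Taking traces of $S$ and $S^2$ gives $bD = \operatorname{Tra}(S) = \sum_{j=1}^n \langle \tau_j, \tau_j\rangle^m$ and $b^2 D = \operatorname{Tra}(S^2) = \sum_{j=1}^n\sum_{k=1}^n \langle \tau_j, \tau_k\rangle^{2m}$. Hence
\[ \Bigl|\sum_{j=1}^n \langle \tau_j, \tau_j\rangle^m\Bigr|^2 = |bD|^2 = |D|\,\bigl|b^2 D\bigr| = |D|\,\Bigl|\sum_{l=1}^n \langle \tau_l, \tau_l\rangle^{2m} + \sum_{\substack{j,k=1\\ j\neq k}}^n \langle \tau_j, \tau_k\rangle^{m}\langle \tau_k,\tau_j\rangle^{m}\Bigr|, \]
and the ultrametric inequality bounds the last absolute value by $\max\bigl\{\,\bigl|\sum_{l=1}^n \langle\tau_l,\tau_l\rangle^{2m}\bigr|,\ \max_{j\neq k}|\langle\tau_j,\tau_k\rangle|^{2m}\bigr\}$, which is the displayed inequality. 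The ``in particular'' statement follows by setting $\langle \tau_j,\tau_j\rangle = 1$, so that $\sum_j \langle\tau_j,\tau_j\rangle^m = n$ and $\sum_l \langle\tau_l,\tau_l\rangle^{2m} = n$, giving $|n|^2 \le |D|\max_{j\neq k}\{|n|, |\langle\tau_j,\tau_k\rangle|^{2m}\}$.

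The steps needing care — rather than difficulty — are: (a) verifying that $\operatorname{Sym}^m(\mathbb{Q}_p^d)$ carries a well-defined symmetric $\mathbb{Q}_p$-bilinear form under which $\tau_j^{\otimes m}$ is a genuine element, the identity $\langle\tau_j^{\otimes m},\tau_k^{\otimes m}\rangle=\langle\tau_j,\tau_k\rangle^m$ holds, and the trace of an operator $x\mapsto\langle x,v\rangle v$ equals $\langle v,v\rangle$ (this is exactly what legitimizes the two trace formulas above, just as in Theorem \ref{WELCHNON1}); and (b) keeping in mind that $|D|$ can be strictly less than $1$ — indeed $p$ may divide $\binom{d+m-1}{m}$ — so there is no Archimedean-style ``clearing of denominators'', but this is harmless because we only multiply and divide by the positive real number $|D|$. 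Everything else is the same two-line trace identity as in Theorem \ref{WELCHNON1}, now read inside $\operatorname{Sym}^m(\mathbb{Q}_p^d)$; I expect no substantial obstacle beyond correctly setting up the symmetric-tensor inner product.
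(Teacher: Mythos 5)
Your proposal is correct and follows essentially the same route as the paper's proof: the same operator $S_\tau$ on $\operatorname{Sym}^m(\mathbb{Q}_p^d)$, the same two trace identities for $S_\tau$ and $S_\tau^2$ combined with $\langle \tau_j^{\otimes m},\tau_k^{\otimes m}\rangle=\langle \tau_j,\tau_k\rangle^m$ and $\dim \operatorname{Sym}^m(\mathbb{Q}_p^d)=\binom{d+m-1}{m}$, and the same ultrametric estimate after splitting diagonal and off-diagonal terms. Nothing further is needed.
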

  \begin{proof}
 Define $S_\tau : \text{Sym}^m(\mathbb{Q}_p^d)\ni x \mapsto \sum_{j=1}^n\langle x, \tau_j^{\otimes m}\rangle \tau_j^{\otimes m} \in \text{Sym}^m(\mathbb{Q}_p^d)$. Then 
    Then 
    \begin{align*}
    	&b\operatorname{dim(\text{Sym}^m(\mathbb{Q}_p^d))}=\operatorname{Tra}(bI_{\text{Sym}^m(\mathbb{Q}_p^d)})=\operatorname{Tra}(S_{\tau})=\sum_{j=1}^n\langle \tau_j^{\otimes m} , \tau_j^{\otimes m}  \rangle , \\
    	& b^2\operatorname{dim(\text{Sym}^m(\mathbb{Q}_p^d))}=\operatorname{Tra}(b^2I_{\text{Sym}^m(\mathbb{Q}_p^d)})=\operatorname{Tra}(S^2_{\tau})=\sum_{j=1}^n\sum_{k=1}^n\langle \tau_j^{\otimes m} , \tau^{\otimes m} _k \rangle\langle \tau^{\otimes m} _k, \tau^{\otimes m} _j \rangle.
    \end{align*}	
Therefore 

    \begin{align*}
    	&\left|\sum_{j=1}^n\langle \tau_j, \tau_j \rangle^m \right|^2=	\left|\sum_{j=1}^n\langle \tau_j^{\otimes m}, \tau_j^{\otimes m} \rangle \right|^2=|\operatorname{Tra}(S_{\tau})|^2=\left|b\operatorname{dim(\text{Sym}^m(\mathbb{Q}_p^d))}\right|^2\\
    	&=\left|\operatorname{dim(\text{Sym}^m(\mathbb{Q}_p^d))}\right|\left|b^2\operatorname{dim(\text{Sym}^m(\mathbb{Q}_p^d))}\right|\\
    	&=\left|\operatorname{dim(\text{Sym}^m(\mathbb{Q}_p^d))}\right|\left|\sum_{j=1}^n\sum_{k=1}^n\langle \tau_j^{\otimes m}, \tau_k^{\otimes m} \rangle\langle \tau_k^{\otimes m}, \tau_j^{\otimes m} \rangle\right|\\
    	&=\left|{d+m-1 \choose m}\right|\left|\sum_{j=1}^n\sum_{k=1}^n\langle \tau_j^{\otimes m}, \tau_k^{\otimes m} \rangle\langle \tau_k^{\otimes m}, \tau_j^{\otimes m} \rangle\right|\\
    	&=\left|{d+m-1 \choose m}\right|\left|\sum_{j=1}^n\sum_{k=1}^n\langle \tau_j, \tau_k \rangle^m\langle \tau_k, \tau_j \rangle^m\right|\\
    	&=\left|{d+m-1 \choose m}\right|\left| \sum_{l=1}^n\langle \tau_l,\tau_l \rangle^{2m}+\sum_{j,k=1, j\neq k}^n\langle \tau_j,\tau_k\rangle^m \langle \tau_k,\tau_j\rangle^m\right|\\
    	&\leq \left|{d+m-1 \choose m}\right| \max_{1\leq j,k \leq n, j \neq k}\left \{\left| \sum_{l=1}^n\langle \tau_l,\tau_l \rangle^{2m} \right|, |\langle \tau_j,\tau_k\rangle^m \langle \tau_k,\tau_j\rangle^m| \right\}\\
    	&= \left|{d+m-1 \choose m}\right| \max_{1\leq j,k \leq n, j \neq k}\left \{\left| \sum_{l=1}^n\langle \tau_l,\tau_l\rangle^{2m} \right|, |\langle \tau_j,\tau_k\rangle|^{2m}\right\}.
    \end{align*}
    Whenever $\langle \tau_j, \tau_j\rangle =1$ for all $1\leq j \leq n$, 
    \begin{align*}
    	|n|^2\leq  \left|{d+m-1 \choose m}\right| \max_{1\leq j,k \leq n, j \neq k}\{|n|, |\langle \tau_j, \tau_k\rangle|^{2m} \}.
    \end{align*}
  \end{proof}
\begin{remark}
	Conditions given in the Theorem \ref{WELCHNON2} says that the operator $S_\tau $ in the proof of Theorem \ref{WELCHNON2}	is diagonalizable. Thus Theorem \ref{WELCHNON2}  is restrictive as the   hypothesis is stronger than that   of Theorem 2.3 in \cite{MAHESHKRISHNA4}. However, note that the field $\mathbb{Q}_p$ does not satisfies the Equation (2) in \cite{MAHESHKRISHNA4} (see \cite{PEREZGARCIASCHIKHOF}) and hence neither the results in this paper can be derived from the results in \cite{MAHESHKRISHNA4} nor the results in \cite{MAHESHKRISHNA4} can be derived from the results in this paper. 
\end{remark}
\begin{remark}
	Theorems \ref{WELCHNON1}  and \ref{WELCHNON2} hold by replacing $\mathbb{Q}_p^d$ by a $d$-dimensional p-adic Hilbert space over any non-Archimedean (complete) valued field (such as $\mathbb{C}_p$).
\end{remark}

\section{p-adic Zauner Conjecture and open problems}
Using Theorem \ref{WELCHNON1} we ask the  following question.
\begin{question}\label{Q1}
	\textbf{Given a prime $p$, for which  $(d,n) 	\in \mathbb{N}\times \mathbb{N}$,  there exist vectors $\tau_1, \dots, \tau_n \in \mathbb{Q}_p^d$ satisfying the following.
		\begin{enumerate}[\upshape(i)]
			\item $\langle \tau_j, \tau_j \rangle =1$ for all $1\leq j \leq n$.
			\item  There exists $b \in \mathbb{Q}_p$ satisfying 
			\begin{align*}
				\sum_{j=1}^{n}\langle x, \tau_j\rangle \tau_j =bx, \quad \forall x \in \mathbb{Q}^d_p.
			\end{align*} 
			\item 
			\begin{align*}
				\max_{1\leq j,k \leq n, j \neq k}\{|n|, |\langle \tau_j, \tau_k\rangle|^2 \}= \frac{|n|^2}{|d|}.
			\end{align*}
	\end{enumerate}}
\end{question}
We can formulate a strong form of Question \ref{Q1} as follows. 
\begin{question}\label{Q2}
	\textbf{Given a prime $p$, for which  $(d,n) 	\in \mathbb{N}\times \mathbb{N}$,  there exist vectors $\tau_1, \dots, \tau_n \in \mathbb{Q}_p^d$ satisfying the following.
	\begin{enumerate}[\upshape(i)]
		\item $\langle \tau_j, \tau_j \rangle =1$ for all $1\leq j \leq n$.
		\item  There exists $b \in \mathbb{Q}_p$ satisfying 
		\begin{align*}
			\sum_{j=1}^{n}\langle x, \tau_j\rangle \tau_j =bx, \quad \forall x \in \mathbb{Q}^d_p.
		\end{align*} 
		\item 
		\begin{align*}
			\max_{1\leq j,k \leq n, j \neq k}\{|n|, |\langle \tau_j, \tau_k\rangle|^2 \}= \frac{|n|^2}{|d|}.
		\end{align*}
	\item $\|\tau_j\| =1$ for all $1\leq j \leq n$.
\end{enumerate}}	
\end{question}
Why Question \ref{Q2} is different than Question \ref{Q1}? Reason is that unlike non-Archimedean Hilbert spaces, in p-adic Hilbert spaces, norm is not defined as $\sqrt{|\langle \cdot, \cdot\rangle|}.$
A particular case of Question \ref{Q1} is the following p-adic  version of Zauner conjecture (see \cite{APPLEBY123, APPLEBY, ZAUNER, SCOTTGRASSL, FUCHSHOANGSTACEY, RENESBLUMEKOHOUTSCOTTCAVES, APPLEBYSYMM, BENGTSSON, APPLEBYFLAMMIAMCCONNELLYARD, KOPPCON, GOURKALEV, BENGTSSONZYCZKOWSKI, PAWELRUDNICKIZYCZKOWSKI, BENGTSSON123, MAGSINO, MAHESHKRISHNA} for Zauner conjecture in Hilbert spaces,   \cite{MAHESHKRISHNA2}  for Zauner conjecture in Hilbert C*-modules,  \cite{MAHESHKRISHNA3}  for Zauner conjecture in Banach spaces and \cite{MAHESHKRISHNA4} for Zauner conjecture in non-Archimedean Hilbert spaces).
\begin{conjecture}\label{NZ} \textbf{(p-adic Zauner Conjecture)
		Let $p$ be a prime.	For each $d\in \mathbb{N}$, there exist vectors $\tau_1, \dots, \tau_{d^2} \in \mathbb{Q}_p^d$ satisfying the following.
		\begin{enumerate}[\upshape(i)]
			\item $\langle \tau_j, \tau_j \rangle =1$ for all $1\leq j \leq d^2$.
			\item  There exists $b \in \mathbb{Q}_p$ satisfying 
			\begin{align*}
				\sum_{j=1}^{d^2}\langle x, \tau_j\rangle \tau_j =bx, \quad \forall x \in \mathbb{Q}^d_p.
			\end{align*} 
			\item 
			\begin{align*}
				|\langle \tau_j, \tau_k\rangle|^2 =|n|, \quad \forall 1\leq j, k \leq d^2, j \neq k.
			\end{align*}
	\end{enumerate}}
	\end{conjecture}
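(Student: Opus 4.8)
The plan is to realize the required configuration as the orbit of a single fiducial vector under a finite group of isometries of the bilinear form $\langle x,y\rangle=\sum_j x_jy_j$, thereby making conditions (i) and (ii) automatic and isolating the genuine difficulty in condition (iii). Concretely, I would look for an absolutely irreducible subgroup $G\le O(d,\mathbb{Q}_p)$ of the orthogonal group of this form, together with a vector $\tau_0\in\mathbb{Q}_p^d$ with $\langle\tau_0,\tau_0\rangle=1$ whose orbit $\{U_g\tau_0:g\in G\}$ has exactly $d^2$ distinct points. Writing $U_g^{-1}=U_g^{T}$ for the transpose relative to $\langle\cdot,\cdot\rangle$, the frame operator of the orbit is $\sum_g U_g\tau_0\tau_0^{T}U_g^{T}=\sum_g U_g(\tau_0\tau_0^{T})U_g^{-1}$, a $G$-equivariant twirl that Schur's lemma forces to be a scalar $bI_{\mathbb{Q}_p^d}$ as soon as $G$ acts absolutely irreducibly; this yields (ii) with no further work, and (i) holds for every orbit vector because the $U_g$ preserve the form. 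Condition (iii) then collapses to the single fiducial requirement $|\langle\tau_0,U_g\tau_0\rangle|^2=|d^2|$, i.e. $v_p(\langle\tau_0,U_g\tau_0\rangle)=v_p(d)$, for each nontrivial coset representative $g$.

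The first step I would carry out is the construction of the covariance group, and here the p-adic setting departs sharply from the complex one. The complex Zauner/SIC story uses the Weyl--Heisenberg group generated by the cyclic shift $X$ and the diagonal $Z=\operatorname{diag}(1,\zeta_d,\dots,\zeta_d^{d-1})$; the shift $X$ is a permutation matrix and hence lies in $O(d,\mathbb{Q}_p)$, but $Z$ is \emph{symmetric} rather than orthogonal with respect to $\langle\cdot,\cdot\rangle$ (there is no conjugation in $\mathbb{Q}_p$), so the naive Heisenberg group does not act by isometries of the form. I would therefore either replace $Z$ by a genuinely orthogonal modulation operator generating, with $X$, a group carrying an absolutely irreducible $d$-dimensional orthogonal representation of the correct order, or abandon strict covariance and solve (i)--(ii) directly as the quadratic system $\sum_j\tau_j\tau_j^{T}=bI_{\mathbb{Q}_p^d}$, $\langle\tau_j,\tau_j\rangle=1$. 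A secondary but unavoidable issue is that the natural construction wants a primitive $d$-th root of unity in $\mathbb{Q}_p$, which for odd $p$ exists only when $d\mid p-1$; for the remaining pairs $(d,p)$ one must produce isometries defined over $\mathbb{Q}_p$ without passing to $\mathbb{Q}_p(\zeta_d)$, since the conjecture is stated over $\mathbb{Q}_p^d$ itself and the Remark after Theorem~\ref{WELCHNON2} licenses extensions such as $\mathbb{C}_p$ only.

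With a candidate group and fiducial in hand, I would attack (iii) by the p-adic Newton/Hensel method. The equations $v_p(\langle\tau_0,U_g\tau_0\rangle)=v_p(d)$ for $g\neq e$, together with $\langle\tau_0,\tau_0\rangle=1$, form a finite polynomial system in the coordinates of $\tau_0$ over $\mathbb{Q}_p$. The strategy is to exhibit an approximate fiducial modulo a suitable power of $p$ — for instance by reducing a known algebraic (number-field) SIC fiducial at a prime above $p$, or by solving the reduced system over the residue field $\mathbb{F}_p$ — and then lift it to an exact solution in $\mathbb{Q}_p$ by Hensel's lemma, provided the relevant Jacobian is nonsingular at the seed. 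Because (iii) constrains only the valuations of the off-diagonal pairings, not their exact values, the system is visibly laxer than the rigid complex equiangularity equations, and I would expect this slack to be exactly what makes lifting feasible.

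The hard part, and the reason this remains a conjecture, is producing the mod-$p$ seed uniformly in $d$ and guaranteeing the nonsingularity needed for Hensel lifting; this is the p-adic incarnation of the notorious difficulty of the classical Zauner conjecture, for which no general existence mechanism for the fiducial is known and the equiangularity equations must be solved dimension by dimension. The present situation is simultaneously easier (only valuations are constrained, and Hensel lifting replaces the transcendental analysis of the complex case) and harder (the symmetric bilinear form destroys the clean Heisenberg covariance, and one must remain inside $\mathbb{Q}_p$ despite the root-of-unity obstruction). I would therefore expect the first genuine progress to come from settling the small dimensions $d=2,3$ explicitly and from classifying which primes $p$ admit an absolutely irreducible orthogonal covariance group in each dimension, rather than from a single uniform argument.
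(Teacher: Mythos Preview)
The statement you are attempting to prove is Conjecture~\ref{NZ}, which the paper explicitly labels as a \emph{conjecture} and does not prove; it is presented as an open problem motivated by Theorem~\ref{WELCHNON1} and the classical Zauner conjecture. There is therefore no ``paper's own proof'' to compare against.

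Your proposal is not a proof either, and you correctly identify this yourself when you write ``the reason this remains a conjecture.'' What you have written is a sensible research program: reduce (i) and (ii) to Schur's lemma via an absolutely irreducible covariance group in $O(d,\mathbb{Q}_p)$, then attack (iii) by Hensel lifting from a mod-$p$ seed. The observations about the obstructions are accurate --- the diagonal modulation $Z$ is not orthogonal with respect to the symmetric bilinear form, primitive $d$-th roots of unity need not lie in $\mathbb{Q}_p$, and producing the seed fiducial uniformly in $d$ is precisely the unresolved core of the classical problem. But none of this constitutes a proof: you have not exhibited the covariance group, the fiducial, the mod-$p$ seed, or the Jacobian nonsingularity for any single $d\ge 2$, let alone for all $d$. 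The proposal is an outline of where the difficulties lie, not a resolution of them, and should be presented as such rather than as a proof attempt.
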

Question \ref{Q2} gives the following Zauner conjecture. 
\begin{conjecture}
\textbf{(p-adic Zauner Conjecture - strong form)
	Let $p$ be a prime.	For each $d\in \mathbb{N}$, there exist vectors $\tau_1, \dots, \tau_{d^2} \in \mathbb{Q}_p^d$ satisfying the following.
	\begin{enumerate}[\upshape(i)]
		\item $\langle \tau_j, \tau_j \rangle =1$ for all $1\leq j \leq d^2$.
		\item  There exists $b \in \mathbb{Q}_p$ satisfying 
		\begin{align*}
			\sum_{j=1}^{d^2}\langle x, \tau_j\rangle \tau_j =bx, \quad \forall x \in \mathbb{Q}^d_p.
		\end{align*} 
		\item 
		\begin{align*}
			|\langle \tau_j, \tau_k\rangle|^2 =|n|, \quad \forall 1\leq j, k \leq d^2, j \neq k.
		\end{align*}
	\item $\|\tau_j\| =1$ for all $1\leq j \leq d^2$.
\end{enumerate}}	
\end{conjecture}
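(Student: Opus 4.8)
The strong form differs from Conjecture~\ref{NZ} only by the extra demand $\|\tau_j\|=1$, where $\|(a_i)_{i=1}^d\|=\max_{1\le i\le d}|a_i|$ is the Banach norm of $\mathbb{Q}_p^d$. The plan is to produce, for each $d$, one family $\tau_1,\dots,\tau_{d^2}$ meeting all four conditions, imitating the complex Zauner/SIC-POVM problem in which the known solutions are orbits $\{U_j\eta\}_{j=1}^{d^2}$ of a single fiducial $\eta$ under a finite group. The first thing to settle is which group. The clock--shift (Weyl--Heisenberg) group used over $\mathbb{C}$ does \emph{not} transfer: the $\mathbb{Q}_p$-pairing $\langle x,y\rangle=\sum_i x_iy_i$ is a symmetric bilinear form for which the clock operator $Z=\operatorname{diag}(1,\omega,\dots,\omega^{d-1})$ is not orthogonal, so $\langle Z^b\eta,Z^b\eta\rangle=\sum_k\omega^{2bk}\eta_k^2\ne\langle\eta,\eta\rangle$ in general, and a Weyl--Heisenberg orbit violates condition~(i) already away from the centre. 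One is led instead to orbits under a finite, absolutely irreducible subgroup $G<O_d(\mathbb{Q}_p)=\{U:U^{\mathsf{T}}U=I\}$ (signed permutation matrices and their relatives): such $U$ preserve both $\langle\cdot,\cdot\rangle$ and $\|\cdot\|$, so once $\langle\eta,\eta\rangle=1=\|\eta\|$ conditions~(i) and~(iv) hold along the whole orbit; condition~(ii) then holds with $b=d$ because the operator $x\mapsto\sum_j\langle x,U_j\eta\rangle U_j\eta$ is a $G$-average and hence scalar by absolute irreducibility; and everything reduces to finding $\eta\in\mathbb{Z}_p^d$ with a unit coordinate and $\sum_i\eta_i^2=1$ whose $G$-orbit has exactly $d^2$ points with $|\langle\eta,U_j\eta\rangle|=|d|$ whenever $U_j\eta\ne\eta$.

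An alternative, when no suitable group presents itself in a given dimension, is to drop covariance: take an exact complex SIC fiducial for dimension $d$ from the arithmetic SIC literature, living in a number field $K$, pick a place of $K$ above $p$ at which the fiducial and its nontrivial overlaps reduce with the correct absolute values, clear denominators to land in $\mathbb{Z}_p$ (or in the valuation ring of $\mathbb{C}_p$ when $d\nmid p-1$, using the final Remark of Section~2), and lift to an exact solution by Hensel's lemma; for small or highly structured $d$ one may instead solve the tightness-plus-equiangularity polynomial system modulo $p$ by point counting over the residue field and then lift. In either route it helps to split the problem: first prove Conjecture~\ref{NZ} ignoring norms, then observe that~(iv) is the modest addition it appears to be, since $\langle\eta,\eta\rangle=1$ already forces $\|\eta\|\ge 1$ (if $|\eta_i|<1$ for every $i$ then $|\langle\eta,\eta\rangle|\le\max_i|\eta_i|^2<1$, impossible), so that in an orthogonal-orbit solution only the $p$-integrality of one orbit representative remains to be arranged.

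The step I expect to be the genuine obstacle is exactly this existence step. It already contains the complex Zauner conjecture, which is open, and it carries new p-adic difficulties of its own: the nontrivial overlaps in~(iii) must have absolute value \emph{exactly} $|d|$, this has to be compatible with the tightness relation~(ii), and the $\mathbb{Q}_p$-rationality forced by~(iv) requires a $p$-integral fiducial, a condition not preserved by the natural symmetries (Galois and Clifford conjugation change $\|\eta\|$, whereas moving within a single orbit does not). I would therefore expect a complete proof only conditionally — granting complex SIC existence together with a suitably behaved reduction — or dimension by dimension for small $d$ via explicit lifting, or along infinite families of $d$ for which the defining equations linearize; a uniform unconditional proof for all primes $p$ and all $d$ appears to lie beyond present methods.
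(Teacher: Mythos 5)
The statement you are addressing is not a theorem of the paper: it is stated there as the \emph{p-adic Zauner Conjecture (strong form)}, an open problem the author formulates by analogy with Question~\ref{Q2}, and the paper offers no proof of it. So there is no argument of the paper to compare yours against, and what you have written is, by your own admission, a research program rather than a proof. Your reductions are sensible as far as they go: restricting to orbits under a finite absolutely irreducible subgroup of the orthogonal group of the bilinear form does preserve conditions (i) and (iv) along the orbit and yields (ii) with $b=d$ by a Schur-type averaging argument; your ultrametric observation that $\langle\eta,\eta\rangle=1$ forces $\|\eta\|\geq 1$ (since $|\langle\eta,\eta\rangle|\leq\max_i|\eta_i|^2$) is correct, though it gives only the lower bound and the integrality $\|\eta\|\leq 1$ still has to be arranged; and your point that the Weyl--Heisenberg group does not preserve the symmetric bilinear pairing $\sum_i x_iy_i$ is a genuine obstruction to transplanting the complex SIC machinery verbatim.

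The gap is the one you yourself flag: every route you describe terminates at an unproved existence assertion --- a fiducial $\eta$ whose orbit has exactly $d^2$ points with all nontrivial overlaps of absolute value exactly $|d|$, or an exact complex SIC fiducial admitting a place above $p$ with the required reduction and a Hensel lift compatible with (ii)--(iv). Neither is established for a single general $d$, and the second route presupposes complex SIC existence, which is itself open; moreover it is not even clear that the complex equations and the p-adic ones are related in the way the reduction argument would need, since condition (iii) here is a statement about p-adic absolute values, not about archimedean ones, and the paper's own Remark only says the bounds of Section~2 persist over extensions such as $\mathbb{C}_p$, not that solutions transfer. So no part of your text constitutes a proof of the conjecture, conditional or otherwise, for any infinite family of $(p,d)$; at best it is a plausible strategy whose decisive step coincides with the open problem the paper poses. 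One further small point: in condition (iii) the quantity $|n|$ should be read as $|d^2|=|d|^2$ (the paper carries the symbol $n$ over from Question~\ref{Q1} even though here $n=d^2$), which is consistent with your normalization $|\langle\eta,U_j\eta\rangle|=|d|$.
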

We recall  the definition of Gerzon's bound which allows us to remember companions   to  Welch bounds in  Hilbert spaces. 
 \begin{definition}\cite{JASPERKINGMIXON}
	Given $d\in \mathbb{N}$, define \textbf{Gerzon's bound}
	\begin{align*}
		\mathcal{Z}(d, \mathbb{K})\coloneqq 
		\left\{ \begin{array}{cc} 
			d^2 & \quad \text{if} \quad \mathbb{K} =\mathbb{C}\\
			\frac{d(d+1)}{2} & \quad \text{if} \quad \mathbb{K} =\mathbb{R}.\\
		\end{array} \right.
	\end{align*}	
\end{definition}
\begin{theorem}\cite{JASPERKINGMIXON, XIACORRECTION, MUKKAVILLISABHAWALERKIPAAZHANG, SOLTANALIAN, BUKHCOX, CONWAYHARDINSLOANE, HAASHAMMENMIXON, RANKIN}  \label{LEVENSTEINBOUND}
	Define $\mathbb{K}=\mathbb{R}$ or $\mathbb{C}$ and    $m\coloneqq \operatorname{dim}_{\mathbb{R}}(\mathbb{K})/2$.	If	$\{\tau_j\}_{j=1}^n$  is any collection of  unit vectors in $\mathbb{K}^d$, then
	\begin{enumerate}[\upshape(i)]
		\item (\textbf{Bukh-Cox bound})
		\begin{align*}
			\max _{1\leq j,k \leq n, j\neq k}|\langle \tau_j, \tau_k\rangle |\geq \frac{\mathcal{Z}(n-d, \mathbb{K})}{n(1+m(n-d-1)\sqrt{m^{-1}+n-d})-\mathcal{Z}(n-d, \mathbb{K})}\quad \text{if} \quad n>d.
		\end{align*}
		\item (\textbf{Orthoplex/Rankin bound})	
		\begin{align*}
			\max _{1\leq j,k \leq n, j\neq k}|\langle \tau_j, \tau_k\rangle |\geq\frac{1}{\sqrt{d}} \quad \text{if} \quad n>\mathcal{Z}(d, \mathbb{K}).
		\end{align*}
		\item (\textbf{Levenstein bound})	
		\begin{align*}
			\max _{1\leq j,k \leq n, j\neq k}|\langle \tau_j, \tau_k\rangle |\geq \sqrt{\frac{n(m+1)-d(md+1)}{(n-d)(md+1)}} \quad \text{if} \quad n>\mathcal{Z}(d, \mathbb{K}).
		\end{align*}
		\item (\textbf{Exponential bound})
		\begin{align*}
			\max _{1\leq j,k \leq n, j\neq k}|\langle \tau_j, \tau_k\rangle |\geq 1-2n^{\frac{-1}{d-1}}.
		\end{align*}
	\end{enumerate}	
\end{theorem}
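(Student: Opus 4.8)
Since the four inequalities in the statement are classical facts about real and complex spherical codes (recalled here only as the Archimedean companions of the Welch bounds), the plan is not to reprove them from scratch but to assemble them from the cited sources, supplying the short self-contained arguments where they exist and pointing to the literature otherwise.

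For the Orthoplex/Rankin bound (ii) I would replace the unit vectors by their rank-one projections $P_j:=\tau_j\tau_j^{*}$, which sit inside the real inner-product space of Hermitian (resp.\ symmetric) $d\times d$ matrices, of real dimension $\mathcal{Z}(d,\mathbb{K})$. Recentring to $Q_j:=P_j-\tfrac{1}{d}I$ produces nonzero trace-zero matrices (for $d\ge 2$; $d=1$ is trivial) satisfying $\langle Q_j,Q_k\rangle=|\langle\tau_j,\tau_k\rangle|^{2}-\tfrac{1}{d}$ for $j\neq k$. If the coherence were everywhere $<1/\sqrt{d}$, all these inner products would be strictly negative; since a real inner-product space of dimension $N$ holds at most $N+1$ vectors with pairwise negative inner products and the $Q_j$ lie in a subspace of dimension $\mathcal{Z}(d,\mathbb{K})-1$, this forces $n\le\mathcal{Z}(d,\mathbb{K})$, contradicting $n>\mathcal{Z}(d,\mathbb{K})$. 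For the exponential bound (iv) I would run a sphere-packing estimate: if every $|\langle\tau_j,\tau_k\rangle|$ were below the stated threshold, the lines $\mathbb{K}\tau_j$ would be pairwise angularly separated by a fixed amount, the corresponding antipodal caps on the unit sphere of $\mathbb{K}^d$ would be disjoint, and comparing the total cap measure with the measure of the whole sphere (via the standard lower bound on the measure of a cap in terms of its angular radius) bounds $n$ and yields the inequality after rearrangement.

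The Levenstein bound (iii) and the Bukh--Cox bound (i) are the substantive ones, and this is where the main obstacle lies, since neither admits an elementary proof. For (iii) I would run the Delsarte--Yudin linear-programming method on the sphere: choose an explicit degree-$2$ polynomial that is a nonnegative combination of the relevant Gegenbauer/Jacobi polynomials and is $\le 0$ on $[-t,t]$ for the hypothetical coherence $t$, then sum the inequality $\sum_{j,k} f(\langle\tau_j,\tau_k\rangle)\ge 0$ over all ordered pairs; in the regime $n>\mathcal{Z}(d,\mathbb{K})$ the optimal such choice reduces to exactly the displayed expression. For (i) I would invoke the Bukh--Cox argument, which relates a configuration of $n$ vectors in $\mathbb{K}^d$ to one of $n$ vectors in $\mathbb{K}^{n-d}$ via a Naimark-type complement and runs a sharpened potential/linear-programming estimate there. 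Accordingly, the honest plan is to give the two short proofs above for (ii) and (iv) and to cite \cite{BUKHCOX}, \cite{JASPERKINGMIXON} and \cite{RANKIN} (together with the remaining listed references) for (i) and (iii).
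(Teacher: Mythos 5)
This theorem is quoted in the paper purely as background, with citations and no proof, so there is no in-paper argument to measure your attempt against; your plan of citing \cite{BUKHCOX} and the linear-programming literature for (i) and (iii) while only sketching (ii) and (iv) is therefore consistent with how the paper itself treats the result. Your sketch of (ii) is the standard Rankin-type argument and is correct as stated: the traceless matrices $Q_j=\tau_j\tau_j^{*}-\tfrac{1}{d}I$ lie in a real inner-product space of dimension $\mathcal{Z}(d,\mathbb{K})-1$, coherence strictly below $1/\sqrt{d}$ forces $\langle Q_j,Q_k\rangle=|\langle\tau_j,\tau_k\rangle|^{2}-\tfrac{1}{d}<0$ for all $j\neq k$, and the classical fact that at most $N+1$ vectors in dimension $N$ can have pairwise negative inner products caps $n$ at $\mathcal{Z}(d,\mathbb{K})$, contradicting $n>\mathcal{Z}(d,\mathbb{K})$. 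For (iv) the cap-packing outline is the right mechanism, but the specific constant and exponent in $1-2n^{-1/(d-1)}$ emerge only from the explicit volume estimate for metric balls in the relevant projective space (real dimension $2m(d-1)$ in the complex case), which you leave implicit; as written your argument delivers a bound of that general shape rather than the exact displayed inequality, so for the precise statement you should still lean on the cited sources, exactly as the paper does.
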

Theorem \ref{LEVENSTEINBOUND}  and Theorem \ref{WELCHNON1}  give  the following problem.
\begin{question}
	\textbf{Whether there is a   p-adic version of Theorem \ref{LEVENSTEINBOUND}? In particular, does there exists a   version of}
	\begin{enumerate}[\upshape(i)]
		\item \textbf{p-adic Bukh-Cox bound?}
		\item \textbf{p-adic Orthoplex/Rankin bound?}
		\item \textbf{p-adic Levenstein bound?}
		\item \textbf{p-adic Exponential bound?}
	\end{enumerate}		
\end{question}
We already wrote that  Welch bounds have applications in study of equiangular lines.  We wish to formulate equiangular line  problem for p-adic Hilbert spaces. For the study of equiangular lines in Hilbert spaces we refer \cite{LEMMENSSEIDEL, JIANGTIDORYAOZHAOZHAO, GREAVESKOOLENMUNEMASASZOLLOSI, BALLADRAXLERKEEVASHSUDAKOV, BUKH, DECAEN, GLAZYRINYU, BARGYU, JIANGPOLYANSKII, NEUMAIER, GREAVESSYATRIADIYATSYNA, GODSILROY, CALDERBANKCAMERONKANTORSEIDEL, OKUDAYU, YU}, quaternion Hilbert spaces we refer \cite{ETTAOUI}, octonion Hilbert spaces we refer \cite{COHNKUMARMINTON}, finite dimensional vector spaces over finite fields we refer \cite{GREAVESIVERSONJASPERMIXON1, GREAVESIVERSONJASPERMIXON2},  for Banach spaces we refer \cite{MAHESHKRISHNA3}  and for non-Archimedean Hilbert spaces we refer \cite{MAHESHKRISHNA4}.
\begin{question}\label{EQUI}
	\textbf{(p-adic Equiangular Line Problem)	Let $p$ be a prime. Given $a\in \mathbb{Q}_p$, $d \in \mathbb{N}$ and $\gamma>0$, what is the maximum $n =n(p, a,d, \gamma)\in \mathbb{N}$ such that there exist vectors $\tau_1, \dots, \tau_n \in \mathbb{Q}_p^d$ satisfying the following.
\begin{enumerate}[\upshape(i)]
	\item $\langle \tau_j, \tau_j \rangle =a$ for all $1\leq j \leq n$.
	\item $|\langle \tau_j, \tau_k \rangle|^2 =\gamma$ for all $1\leq j, k \leq n, j \neq k$.
\end{enumerate}
In particular, whether there is a p-adic  Gerzon bound?}
\end{question}
 Question \ref{EQUI} can be easily lifted  to  formulate question of   p-adic regular $s$-distance sets.

 \bibliographystyle{plain}
 \bibliography{reference.bib}

\end{document}